\newtheorem*{thm}{Theorem}
\newtheorem{lem}{Lemma}
\newcommand{\norm}[1]{\left\Vert#1\right\Vert}
\newcommand{\Norm}[1]{\left\Vert#1\right\Vert_{(1)}}
\newcommand{\Abs}[1]{\left\Vert#1\right\Vert_{(2)}}
\newcommand{\abs}[1]{\left\vert#1\right\vert}
\newcommand{\rl}{{\mathbb{R}}}
\newcommand{\cx}{{\mathbb{C}}}
\newcommand{\dbar}{\overline{\partial}}
\newcommand{\dom}{\mathrm{Dom}}
\newcommand{\bd}{\mathsf{b}}
\newcommand{\BOne}{{\mathbb{B}_1}}
\newcommand{\BTwo}{\mathbb{B}_2}
\begin{document}

\title{A Class of Domains with noncompact $\dbar$-Neumann operator}
\author{Debraj Chakrabarti}
\address{TIFR Centre for Applicable Mathematics, Sharadanagara, Chikkabommasandra,  Bengaluru- 560 065, India}
\email{debraj@math.tifrbng.res.in}
\begin{abstract}
The $\overline{\partial}$-Neumann operator (the  inverse of the complex Laplacian) is shown
to be noncompact on certain domains in complex Euclidean space. These domains are either higher-dimensional analogs of
the Hartogs triangle, or have such a generalized Hartogs triangle imbedded appropriately
in them.
\end{abstract}
\subjclass[2010]{32W05}
\keywords{$\overline{\partial}$-Neumann oprator, Hartogs Triangle}
\maketitle
\section{Introduction}

Let $n_1,{n_2}$ be positive integers, and let $n=n_1+{n_2}$. For a point
$z=(z_1,\dots,z_n)\in\cx^n$, we write $'z$ for the
point $(z_1,\dots,z_{n_1})$ in $\cx^{n_1}$, and also write $z'$ for the point $(z_{{n_1}+1},\dots,z_n)$ in $\cx^{n_2}$,
and denote $z=('z,z')$. Let $\alpha>0$, and consider the bounded pseudoconvex domain
$H$ in $\cx^n$ given by
\begin{equation}\label{eq-h} H= \left\{('z,z')\in\cx^n\colon \Norm{'z}<\left(\Abs{z'}\right)^\alpha<1\right\},\end{equation}
where $\Norm{\cdot}$, $\Abs{\cdot}$ are arbitrary  norms on the complex vector spaces $\cx^{n_1}$, $\cx^{n_2}$ respectively.

We may refer to the domain $H$ of \eqref{eq-h} as a {\em  Hartogs Triangle,} a term usually applied to the case ${n_1}={n_2}=\alpha=1$.
This domain $H$ belongs
to the class of domains for which non-compactness of the $\dbar$-Neumann problem is established in this note.
(See \cite{straube} for the relevant definitions,
as well as  a comprehensive discussion of compactness in the $\dbar$-Neumann problem. Other important texts
dealing with the $\dbar$-Neumann problem include \cite{fk,cs}.)
To define precisely the class of domains we will be considering, note that 
 the boundary $\bd H$ of $H$ contains the   piece
\begin{align} S&=  \left\{('z,z')\in\cx^n\colon \Norm{'z}<1, \Abs{z'}=1\right\}\nonumber\\
&= \BOne \times \bd\BTwo,\label{eq-S}
\end{align}
where $\BOne\subset\cx^{n_1}$ and $\BTwo\subset\cx^{n_2}$ are the unit balls in the norms $\Norm{\cdot}$ and $\Abs{\cdot}$
respectively.
In this note, we prove non-compactness of the $\dbar$-Neumann operator on
a bounded pseudoconvex domain $\Omega$ in $\cx^n$,
which has roughly speaking the following property: it is possible to embed the Hartogs triangle
$H$ of \eqref{eq-h} into $\Omega$ in such a way that the subset $S$  of the boundary of $H$
given in \eqref{eq-S} is mapped
into the boundary of $\Omega$. For example, it suffices to assume that there is a biholomorphic
map from a neighborhood of $\overline{H}$ into $\cx^n$, which maps $H$ into $\Omega$ and $S$
into $\bd\Omega$.

 More precisely, it is sufficient to assume the following hypotheses on $\Omega$:  (i)  there exists a  biholomorphic map
\begin{equation}\label{eq-f} F: {H}\to F(H)\subset \Omega,\end{equation}
which  extends  to  a $\mathcal{C}^\infty$-diffeomorphism of a neighborhood
of $\overline{H}$ onto a neighborhood of  $\overline{F(H)}$ in $\cx^n$.
(ii)  Further, $F$ itself extends biholomorphically to $S$ in such a way that
\begin{equation} \label{eq-FS}
F(S)\subset \bd\Omega.
\end{equation}

 For such $\Omega$, we will prove  the following:

\begin{thm}\label{thm-main}
For $1\leq q \leq {n_1}$ the $\dbar$-Neumann
operator of $\Omega$ acting on $L^2_{0,q}(\Omega)$ is non-compact.
\end{thm}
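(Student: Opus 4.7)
The plan is to construct a bounded sequence of $\dbar$-closed $(0,q)$-forms on $\Omega$ whose images under $N_q$ have no $L^2$-convergent subsequence. The geometric feature driving the argument is that $F(S)\subset\bd\Omega$ is foliated by the $n_1$-dimensional complex submanifolds $F(\BOne\times\{z_0'\})$ as $z_0'$ varies over $\bd\BTwo$; the presence of such complex structure in the boundary of a pseudoconvex domain is the classical obstruction to compactness of $N_q$ for $1\le q\le n_1$.

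I would first build the sequence on $H$ itself. Pick points $\{w_k\}\subset\BTwo$ with $\Abs{w_k}\to 1$, and take $L^2(\BTwo)$-normalized ``peak'' holomorphic functions $h_k$ concentrating mass near $w_k$ (for instance, normalized Bergman-kernel sections of $\BTwo$, so that $h_k(0)\to 0$ and $h_k\to 0$ weakly). Fix a smooth, compactly supported $\psi$ on $\BOne$ with $\dbar\psi\not\equiv 0$, and for indices $j_2,\dots,j_q\in\{1,\dots,n_1\}$ define
\begin{equation*}
f_k = \overline{h_k(z')}\,\dbar\psi('z)\wedge d\bar z_{j_2}\wedge\cdots\wedge d\bar z_{j_q}.
\end{equation*}
Because $\overline{h_k}$ is anti-holomorphic in $z'$ and $\dbar^2=0$, each $f_k$ is $\dbar$-closed; the $(0,q-1)$-form $u_k=\overline{h_k(z')}\,\psi('z)\,d\bar z_{j_2}\wedge\cdots\wedge d\bar z_{j_q}$ satisfies $\dbar u_k=f_k$; and $\norm{f_k}_{L^2(H)}$ is uniformly bounded above and below while $f_k\to 0$ weakly in $L^2_{0,q}(H)$.

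The identity $\langle N_q^H f_k,f_k\rangle=\norm{u_k'}^2$, where $u_k'=u_k-Pu_k$ is the canonical solution of $\dbar u=f_k$ and $P$ is the Bergman projection onto $\ker\dbar$, yields $\norm{N_q^H f_k}\ge\norm{u_k'}^2/\norm{f_k}$. So it suffices to bound $\norm{u_k'}$ below by a positive constant, which is equivalent to showing that $Pu_k$ does not cancel most of $u_k$; this is the main obstacle. Near $S$ the domain $H$ is product-like in $('z,z')$, and on the model product $\BOne\times\BTwo$ one checks directly that $P$ factors as a tensor product and that $P(\overline{h_k(z')}\,\psi('z))=\overline{h_k(0)}\cdot(P_{\BOne}\psi)('z)$, whose $L^2$-norm tends to zero as $w_k\to\bd\BTwo$. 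The hardest technical part will be transferring this computation from the product model to $H$ itself, by controlling the Bergman kernel of $H$ near $S$ via a perturbative argument.

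Finally, I would transfer the conclusion to $\Omega$ by pushing the $f_k$ forward via $F$. Since $F$ extends smoothly across $S$ and $F(S)\subset\bd\Omega$, the push-forwards $F_*f_k$ are bounded $\dbar$-closed $(0,q)$-forms on $\Omega$ whose supports accumulate on $F(S)\subset\bd\Omega$. A standard comparison between $N_q^H$ and $N_q^\Omega$, localized near $S$ and $F(S)$ and using that $F$ and $F^{-1}$ are smooth there, then transfers the non-compactness to $\Omega$.
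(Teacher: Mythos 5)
There is a genuine gap --- in fact two. First, your construction on $H$ does not cohere as stated: $\overline{h_k(z')}$ is anti-holomorphic, hence annihilated by $\partial$ but \emph{not} by $\dbar$, so $f_k$ is not $\dbar$-closed and $\dbar u_k\neq f_k$ (the extra term $\psi\,\dbar(\overline{h_k})\wedge\cdots$ does not vanish); without closedness the identity $\langle \mathsf{N}_q f_k,f_k\rangle=\norm{u_k'}^2$ is unavailable. If you drop the conjugate the algebra is repaired, but then your projection computation changes (on the product one gets $P(h_k\psi)=h_k\cdot P_{\BOne}\psi$, not $\overline{h_k(0)}\,P_{\BOne}\psi$), and in any case the Bergman projection is the relevant projection only for $q=1$: for $q\geq 2$ the canonical solution is orthogonal to $\ker\dbar$ in $(0,q-1)$-forms, which is much larger than the forms with holomorphic coefficients. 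Most importantly, the step you yourself flag as hardest --- transferring the lower bound from the product model $\BOne\times\BTwo$ to $H$ by ``perturbative'' control of the Bergman kernel of $H$ near $S$ --- is not carried out and is not a perturbation: $H$ is not a small deformation of the product (its fibers over $z'$ shrink as $\Abs{z'}\to 0$, and the boundary need not be strictly pseudoconvex transverse to $\BOne\times\{\zeta\}$, e.g.\ for the sup norm), and no estimate for the kernel of $H$ is offered. This is precisely the point where known localization-type results fail to apply, which is why the paper resorts to an explicit counterexample.

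Second, the passage from $H$ to $\Omega$ via ``a standard comparison between $N_q^H$ and $N_q^\Omega$'' does not exist: compactness or noncompactness of the $\dbar$-Neumann operator does not pass between a domain and a subdomain in any standard way, and the push-forwards $F_*f_k$, extended by zero, need not remain $\dbar$-closed on $\Omega$, since the support of $\dbar\psi$ can meet the part of $\bd F(H)$ interior to $\Omega$ (the image of $\{\Norm{'z}=\Abs{z'}^\alpha\}$). The paper avoids both obstacles simultaneously: it fixes a metric on $\overline{\Omega}$ making $F$ an isometry (legitimate since compactness is independent of the smooth metric, by \c{C}elik--Straube), reduces to $q=n_1$ because compactness percolates up in $q$, and replaces $\mathsf{N}_q$ applied to closed forms by the criterion that $\mathsf{N}_{n_1}$ is compact iff the embedding $\dom(\dbar)\cap\dom(\dbar^*)\hookrightarrow L^2_{0,n_1}(\Omega)$ is compact. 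It then writes down the explicit forms $u_\nu=\sqrt{\nu/\gamma(\nu)}\,\chi\bigl(\Norm{'z}/\Abs{z'}^\alpha\bigr)\,z_n^\nu\,d\overline{z_1}\wedge\cdots\wedge d\overline{z_{n_1}}$ supported in $F(H)$: the cutoff $\chi$ keeps them away from the lateral boundary, their complex-normal component vanishes along $F(S)\subset\bd\Omega$ so they lie in $\dom(\dbar^*)$ (with the point singularity at $F(0)$ removed by a Harvey--Polking type lemma), the graph norms are bounded by the change of variables $(v,w)\mapsto(\Abs{w}^\alpha v,w)$ together with the $\gamma(\nu)$ normalization, and rotational symmetry in $z'$ makes the $u_\nu$ pairwise orthogonal with $L^2$-norms bounded below, so no subsequence converges. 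To salvage your route you would need both a genuine estimate for the canonical solution on $H$ itself and an argument placing the transferred forms in the correct domains on $\Omega$ --- which is exactly what the paper's cutoff-and-normal-component construction supplies directly.
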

{\em Acknowledgments:} The author thanks Professors Mei-Chi Shaw and Emil Straube for their
help in the preparation of this article,  the referee for his  helpful suggestions for improvements in it,
and Prof. Giuseppe Zampieri for pointing out  the reference \cite{khanh}.

\section{Some remarks}

We note that for each $\zeta\in \bd \BTwo$, the subset $F(S)\subset b\Omega$ of \eqref{eq-FS} contains
the ${n_1}$-dimensional analytic variety $F(\BOne\times\{\zeta\})$, and this means absence of
the classical compactness-entailing property
$({P}_q)$, for $1\leq q\leq {n_1}$. This strongly suggests the noncompactness of the $\dbar$-Neumann operator
in degrees $(0,q)$ for $1\leq q \leq {n_1}$. On the
other hand, $(P_q)$ is not known to be  necessary for compactness, 
so this observation by itself does not show that the $\dbar$-Neumann operator
is noncompact. When $n_2=1$, so that the boundary $\bd\Omega$ contains $n-1= n_1$ dimensional
complex manifolds, the noncompactness can be deduced from a result of Catlin, according to which
the presence of such $(n-1)$-dimensional  manifolds on the boundary of a weakly pseudoconvex domain in $\cx^n$
implies non-compactness of the $\dbar$-Neumann operator on $(0,n-1)$ forms
(see \cite{straubefu}, where it is assumed that $n=2$, but the boundary is allowed to be Lipschitz.) 
In the analogs of this result for higher-codimensional manifolds in the boundary (see \cite[Theorem~4.21]{straube}) one needs
to assume that the boundary is strictly pseudoconvex in the directions transverse to the complex manifold. This 
is not true in general for the  generalized Hartogs triangle of \eqref{eq-h}, since e.g.,
we can take $\Abs{w} = \max_{1\leq j\leq n_2}\abs{w_2}$.  In general, establishing 
compactness is a tricky business (see \cite[Chapter 4]{straube}.)
Our interest in the rather special domains
$\Omega$ stems from the fact that  we are able to demonstrate noncompactness by an explicit and elementary
counterexample in the spirit of \cite{krantz, ligo} and \cite[Proposition~6.3]{khanh} by exploiting the  symmetry of $\Omega$ inherited from 
the rotational symmetry of $H$.

We also note here that the result can also be stated in the situation when $\Omega$ is a relatively
compact Stein domain in an $n$ complex-dimensional Hermitian manifold. Under the hypothesis of existence
of a map $F$ with the  same properties as above, we can prove the  noncompactness of the $\dbar$-Neumann operator on
$L^2_{p,q}(\Omega)$, where $0\leq p\leq n$ and $1\leq q \leq {n_1}$. The changes required in the proof are
purely formal, and for clarity of exposition we stick with the ambient manifold $\cx^n$.

Let $N$ be a positive integer, for $1\leq j \leq N$, let $n_j$ be a positive integer, and let $n= \sum_{j=1}^N n_j$.
Suppose that we fix a norm $\norm{\cdot}_{(j)}$ on the vector space $\cx^{n_j}$, and denote a point
$z\in\cx^n$ by $(z^{(1)},\dots,z^{(N)})$, where $z^{(j)}\in\cx^{n_j}$.
In principle, it should be possible to modify the proof of the theorem, to  prove the noncompactness of the $\dbar$-Neumann operator
on a domain of the form
\[\widetilde{H}= \left\{z\in\cx^n\colon \norm{z^{(1)}}_{(1)}< \norm{z^{(2)}}_{(2)}^{\alpha_2} < \norm{z^{(3)}}_{(3)}^{\alpha_3} <\dots
<\norm{z^{(N)}}_{(N)}^{\alpha_N}<1
\right\},\]
or more generally, on a domain in which $\widetilde{H}$ is appropriately embedded.  The exposition in the general case
will involve complicated notation arising from the more intricate geometry, and  for clarity, we write the proof in
a simple situation.
\section{Preliminaries}
\label{sec-prelim}
Pulling back the canonical metric of $\cx^n$ via the map $F^{-1}$ (with $F$ as in \eqref{eq-f}),
followed by a partition of unity argument, there is a smooth Hermitian metric $h$ on $\overline{\Omega}$, such that
the map $F$ is an isometry on $H$. We use the metric $h$ to define the pointwise inner product on spaces of forms,
and the  volume form on $\overline{\Omega}$  in the standard way,
and let $L^2_{p,q}(\Omega)$ be the $L^2$-space of $(p,q)$-forms defined by this pointwise inner product and 
volume form.
Thanks to \cite[Theorem 1]{celik}, we know that whether or not the $\dbar$-Neumann operator is compact  on
the bounded domain $\Omega$ is independent of the hermitian metric on $\Omega$,
as long as the metric is smooth on $\overline{\Omega}$. Consequently, it will suffice to show that the $\dbar$-Neumann operator defined
with respect to the metric $h$ acting on $L^2_{0,q}(\Omega)$,  (denoted by $\mathsf{N}_{q}$) is noncompact, for $1\leq q \leq {n_1}$.
Since $\mathsf{N}_{q+1}$ is compact if $\mathsf{N}_q$ is compact for $1\leq q \leq n-1$, (see \cite[Proposition~4.5]{straube})
it follows that it is sufficient
to show the operator $\mathsf{N}_{{n_1}}:L^2_{0,{n_1}}(\Omega)\to L^2_{0,{n_1}}(\Omega)$ is non-compact.

Let $\dbar^*$ denote the Hilbert-space  adjoint of $\dbar$ on $L^2_{0,{n_1}}(\Omega)$ with respect to the metric $h$,
and denote by $\mathfrak{H}$ the space
 \begin{equation}\label{eq-mathfrakh}\dom(\dbar)\cap\dom(\dbar^*)\subset  L^2_{0,{n_1}}(\Omega),\end{equation}
which is  a Hilbert space with norm
\begin{equation}\label{eq-normh}\norm{f}_{\mathfrak{H}}= \left(\norm{\dbar f}^2_{L^2_{0,{n_1}+1}(\Omega)}+\norm{\dbar^* f}^2_{L^2_{0,{n_1}-1}(\Omega)}\right)^\frac{1}{2}.\end{equation}
To prove non-compactness of $\mathsf{N}_{{n_1}}$, it is sufficient to show that the inclusion map of
$\mathfrak{H}$ into $L^2_{0,{n_1}}(\Omega)$ is noncompact (\cite[Proposition~4.2]{straube}.) Before
we proceed to show this we compute certain quantities which will be useful in the proof. We will denote the $(2n_2-1)$-dimensional Hausdorff measure
induced on a hypersurface in $\rl^{2n_2}=\cx^{n_2}$ by $d\sigma$. For $\nu>0$, we consider
the function
\begin{equation}\label{eq-gamma} \gamma(\nu) = \int_{\bd \BTwo} \abs{t_{n_2}}^{2\nu} d\sigma(t),
\end{equation}
where $t=(t_1,\dots,t_{n_2})\in\cx^{n_2}$, and the integral extends over the boundary $\bd\BTwo=\{\Abs{t}=1\}$
of  the unit ball $\BTwo$ of the norm $\Abs{\cdot}$ on $\cx^{n_2}$. We will denote by $dV$ the
Lebesgue measure on Euclidean spaces of arbitrary dimensions (the dimension being known from the context.)
 We first  prove a couple of lemmas:
\begin{lem}\label{oldlemma}For $\beta\geq 0$, we have $\displaystyle{\int_{\BTwo} \abs{w_{n_2}}^{2\nu}\Abs{w}^{2\beta} dV(w)= \frac{\gamma(\nu)}{2(\nu+\beta+{n_2})}}$
\end{lem}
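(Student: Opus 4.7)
The integrand is a product of two positively homogeneous functions of $w\in\cx^{n_2}$: the factor $\abs{w_{n_2}}^{2\nu}$ is homogeneous of degree $2\nu$ and the factor $\Abs{w}^{2\beta}$ is homogeneous of degree $2\beta$. Since $\BTwo$ is itself the unit ball of the norm $\Abs{\cdot}$ appearing in the integrand, the natural strategy is to parametrize $\BTwo\setminus\{0\}$ by ``norm polar coordinates'' $w = s\tau$, where $s=\Abs{w}\in(0,1)$ and $\tau=w/s\in\bd\BTwo$. In these coordinates the integral should split into a one-variable radial integral and an integral over $\bd\BTwo$, which is exactly the shape of the claimed identity.

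Concretely, under this substitution the Lebesgue measure on $\cx^{n_2}\cong\rl^{2n_2}$ factors as
\[
dV(w) = s^{2n_2-1}\,ds\,d\sigma(\tau),
\]
with $d\sigma$ the induced $(2n_2-1)$-dimensional measure on $\bd\BTwo$ featuring in the definition \eqref{eq-gamma} of $\gamma(\nu)$. Because $\Abs{\tau}=1$ and $w_{n_2}=s\tau_{n_2}$, the integrand simplifies to $s^{2\nu+2\beta}\abs{\tau_{n_2}}^{2\nu}$, and Fubini gives
\[
\int_{\BTwo}\abs{w_{n_2}}^{2\nu}\Abs{w}^{2\beta}\,dV(w) = \left(\int_0^1 s^{2\nu+2\beta+2n_2-1}\,ds\right)\left(\int_{\bd\BTwo}\abs{\tau_{n_2}}^{2\nu}\,d\sigma(\tau)\right).
\]
The radial factor evaluates to $\dfrac{1}{2(\nu+\beta+n_2)}$, while the angular factor is exactly $\gamma(\nu)$ by \eqref{eq-gamma}, yielding the lemma.

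The only delicate step is the justification of the polar factorization of $dV$ for an arbitrary, possibly nonsmooth, norm $\Abs{\cdot}$. By Rademacher's theorem, $\Abs{\cdot}$ is differentiable off a set of $(2n_2-1)$-dimensional measure zero on $\bd\BTwo$; on the regular part the map $(s,\tau)\mapsto s\tau$ is a local diffeomorphism, and a direct Jacobian computation produces the desired factorization of $dV$. I do not anticipate a substantive obstacle: the real content of the lemma is the homogeneity of the integrand, which reduces the identity to an elementary one-variable integration in $s$.
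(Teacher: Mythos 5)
Your proof is correct and takes essentially the same route as the paper, which likewise uses the homogeneity of the integrand to split the integral into the radial factor $\int_0^1 r^{2\nu+2\beta+2n_2-1}\,dr = \frac{1}{2(\nu+\beta+n_2)}$ and the angular factor $\gamma(\nu)$, invoking the coarea formula and the dilation $w=rt$ instead of your explicit polar change of variables $w=s\tau$. The one delicate point you flag (the factorization $dV=s^{2n_2-1}\,ds\,d\sigma$ for a possibly nonsmooth, non-Euclidean norm, where strictly the surface measure should be the coarea measure $d\mathcal{H}^{2n_2-1}/\abs{\nabla\Abs{\cdot}}$ rather than plain Hausdorff measure) is treated with exactly the same level of detail in the paper, and is harmless in context because $\gamma(\nu)$ appears in the subsequent estimates only through this same fixed measure.
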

\begin{proof}
Using the co-area formula,
we can rewrite the integral on the left as
\[ \int_{r=0}^1 \left(\int_{\{\Abs{w}=r\}}\abs{w_{n_2}}^{2\nu} r^{2\beta} d\sigma(w) \right)dr,\]
where the inner integral extends over the level set $\{\Abs{w}=r\}\subset \cx^{n_2}$ and is with respect
to the surface measure $\sigma$ induced on this set by the ambient Lebesgue measure. To evaluate
the inner integral, we make the substitution $w=rt$, where $\Abs{t}=1$, i.e., $t$
lies in the boundary $\bd \BTwo$ of the unit ball of the $\Abs{\cdot}$ norm.
Taking advantage of the fact that the level sets of the norm are related by dilations,
we can compute this integral as
\begin{align*}
&\int_{r=0}^1 \left(\int_{\bd \BTwo}r^{2\nu} \abs{t_{n_2}}^{2\nu} r^{2\beta} r^{2{n_2}-1} d\sigma(t)\right) dr\\
&= \left(\int_0^1 r^{2\nu+2\beta+2{n_2}-1}dr\right) \left(\int_{\bd \BTwo} \abs{t_{n_2}}^{2\nu}d\sigma(t)\right)\\
&= \frac{1}{2(\nu+\beta+{n_2})}\gamma(\nu).
\end{align*}\end{proof}
We will need the following fact regarding removable singularities (cf. \cite{hp}):
\begin{lem} \label{newlemma}Let $N\geq 3$, and  let $P$ be a first order linear partial differential operator on a domain $U\subset\rl^N$.
Let $q\in U$, and suppose that $u,v\in L^2(U)$ are such that on $U\setminus\{q\}$ we have in the sense of distributions
 $Pu=v$. Then,  as distributions, $Pu=v$ on $U$.
\end{lem}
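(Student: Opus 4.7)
The statement is a standard removable-singularity result for $L^2$ distributional solutions at an isolated point. The strategy is a cutoff argument that exploits the vanishing of the $W^{1,2}$-capacity of a single point in dimension $N\geq 3$.

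Write $P=\sum_{j=1}^N a_j\partial_j+b$ with smooth coefficients; the formal adjoint $P^*$ is then also a first-order operator with smooth (hence locally bounded) coefficients. It suffices to show that for every $\phi\in\mathcal{C}_c^\infty(U)$,
\[\int_U u\cdot P^*\phi\,dV=\int_U v\cdot\phi\,dV,\]
and the hypothesis furnishes this whenever $\phi$ vanishes in a neighborhood of $q$. Choose smooth cutoffs $\chi_\epsilon\colon\rl^N\to[0,1]$ with $\chi_\epsilon\equiv 0$ on $B(q,\epsilon/2)$, $\chi_\epsilon\equiv 1$ off $B(q,\epsilon)$, and $\abs{\nabla\chi_\epsilon}\leq C/\epsilon$. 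For small $\epsilon$, $\phi\chi_\epsilon$ is compactly supported in $U\setminus\{q\}$, and the Leibniz identity $P^*(\phi\chi_\epsilon)=\chi_\epsilon P^*\phi-\phi\sum_j a_j\partial_j\chi_\epsilon$ combined with the hypothesis gives
\[\int_U u\,\chi_\epsilon P^*\phi\,dV-\int_U u\,\phi\sum_{j=1}^N a_j\partial_j\chi_\epsilon\,dV=\int_U v\,\phi\chi_\epsilon\,dV.\]

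The plan is then to let $\epsilon\to 0$. Dominated convergence takes the first term on the left to $\int_U u\,P^*\phi\,dV$ and the right-hand side to $\int_U v\,\phi\,dV$, reducing matters to showing that the middle ``Leibniz error'' vanishes. Cauchy-Schwarz bounds its absolute value by
\[C\norm{\phi}_\infty\cdot\norm{u}_{L^2(B(q,\epsilon))}\cdot\norm{\nabla\chi_\epsilon}_{L^2(\rl^N)},\]
and since $\nabla\chi_\epsilon$ is supported in an annulus of volume at most $C\epsilon^N$ with $\abs{\nabla\chi_\epsilon}\leq C/\epsilon$, one obtains $\norm{\nabla\chi_\epsilon}_{L^2}\leq C\epsilon^{(N-2)/2}$, which tends to $0$ once $N\geq 3$. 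Together with $\norm{u}_{L^2(B(q,\epsilon))}\to 0$ by absolute continuity of the Lebesgue integral, the error vanishes and the identity extends to all of $\mathcal{C}_c^\infty(U)$.

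The argument is largely routine; the only substantive point is the balancing of the $L^2$ size of $u$ near $q$ against the $L^2$ size of $\nabla\chi_\epsilon$. The dimensional hypothesis $N\geq 3$ is precisely what makes this balance comfortable, and no local regularity of $u$ beyond membership in $L^2$ is required.
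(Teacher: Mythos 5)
Your argument is correct and is essentially the paper's own proof: both are cutoff arguments at $q$ exploiting that $\norm{\nabla\chi_\epsilon}_{L^2}\lesssim \epsilon^{(N-2)/2}$ (equivalently $\epsilon^{-1}\sqrt{{\rm Vol}(B(q,\epsilon))}\to 0$) when $N\geq 3$. The only cosmetic difference is that your cutoff vanishes near $q$ while the paper's equals $1$ near $q$, which amounts to the same decomposition $\phi=\chi_\epsilon\phi+(1-\chi_\epsilon)\phi$ and the same Cauchy--Schwarz estimate on the error term.
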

\begin{proof} For $\epsilon>0$, let $\chi_\epsilon\in\mathcal{C}^\infty_0(\rl^N)$ be a smooth cutoff such that $0\leq \chi_\epsilon\leq 1$,
$\chi_\epsilon$ is equal to 1 near $q$,  $\chi_\epsilon$ vanishes outside the ball $B(q,\epsilon)$, and we have
$\abs{\nabla\chi_\epsilon}=O(\epsilon^{-1})$.  Let $\phi\in \mathcal{C}^\infty_0(U)$. Writing $\phi=(1-\chi_\epsilon)\phi+\chi_\epsilon\phi$,
and denoting the adjoint of $P$ by $P^*$ we have
\begin{align*}
\abs{\langle Pu-v,\phi\rangle}&= \abs{\langle Pu-v,\chi_\epsilon\phi\rangle}\\
&\leq \abs{\langle u,P^*(\chi_\epsilon \phi)\rangle} +\abs{\langle v,\chi_\epsilon\phi\rangle}\\
& \leq C \left(\epsilon^{-1} +1\right)\sqrt{{\rm Vol}(B(q,\epsilon))}\\
&\to 0 {\text{ as } } \epsilon\to 0^+,
\end{align*}
which proves the lemma.\end{proof}
We note here that although Lemma~\ref{newlemma} is stated for operators acting on functions, the result, 
as well as the proof  (after formal changes) continue to hold for differential operators like $\dbar$  that act on sections 
of vector bundles.  
\section{Counterexample to compactness of $\mathsf{N}_{{n_1}}$}
 We now construct  a  sequence $\{u_\nu\}$ bounded in $\mathfrak{H}$ which has no
 convergent subsequence when viewed as a sequence in $L^2_{0,{n_1}}(\Omega)$.
 Let $\chi$ be a  real-valued non-vanishing smooth function on the interval $[0,1]$
 which vanishes in a neighborhood of  $1$.
 Denote by $(z_1, z_2, \dots, z_n)=('z,z')\in\cx^{n_1}\times \cx^{n_2}$ the coordinates
 on $F(H)\subset\Omega$ given by the map $F^{-1}:F(H)\to \cx^n$.
For each positive integer $\nu$  we consider the $(0,{n_1})$-form
\begin{equation}\label{eq-unudef}u_\nu =\begin{cases}\displaystyle{\sqrt{\frac{\nu}{\gamma(\nu)}}} \cdot\chi\left({\displaystyle{\frac{\Norm{'z}}{\Abs{z'}^\alpha}}}\right)
z_n^\nu \cdot \bigwedge_{j=1}^{{n_1}}d\overline{z_j},& \text{on   } F(H),\\
0, \text{ elsewhere.}\end{cases} \end{equation}
(Here $\alpha$ is as in \eqref{eq-h}, and $\gamma$ is as in \eqref{eq-gamma})
Then  $u_\nu$ has support in $F(H)$ , is bounded on $\overline{\Omega}$, and
is smooth everywhere on $\overline{\Omega}$ except  at $F(0)$. In particular,  each $u_\nu\in L^2_{0,n_1}(\Omega)$.
We claim that the sequence $\{u_\nu\}$ is
bounded in $\mathfrak{H}$ but has no
 convergent subsequence as a sequence in $L^2_{0,{n_1}}(\Omega)$.

Before we begin the proof of the claim, we collect a couple of  simple computations.
Note that a norm $\norm{\cdot}$  on  the  vector space is $\cx^k$ is Lipschitz, and hence differentiable
almost everywhere, and it is easy to see that its gradient is bounded, i.e. there is a constant $K_1>0$, depending only
on the norm $\norm{\cdot}$ such that
\[ \abs{\nabla\left({\norm{z}}\right)}\leq K_1, \text{ for almost every $z\in\cx^k$},\]
where $\abs{\cdot}$ denotes the Euclidean norm in $\cx^k$. Now, a computation shows that
\[ \nabla\left(\chi\left(\frac{\Norm{'z}}{\Abs{z'}^\alpha}\right)\right)=\chi'\left(\frac{\Norm{'z}}{\Abs{z'}^\alpha}\right)
\left(\frac{1}{\Abs{z'}^\alpha}\nabla\left( \Norm{'z}\right) -\alpha \frac{\Norm{'z}}{\Abs{z'}^{\alpha+1}}\nabla\left(\Abs{z'}\right)\right),\]
where the gradient is taken in $\cx^n$.
Therefore there is a constant $K_2>0$ such that for any $z\in H$, we have
\begin{equation}\label{eq-nablachi} \abs{\nabla\left(\chi\left(\frac{\Norm{'z}}{\Abs{z'}^\alpha}\right)\right)}
< \frac{K_2}{\Abs{z'}^\alpha} \text{ a. e.} \end{equation}

We now estimate $\norm{\dbar u_\nu}_{L^2_{0,{n_1}+1}(\Omega)}$. Since  $u_\nu\in L^2_{0,n_1}(\Omega)$,
by Lemma~\ref{newlemma} 
the singularity at $F(0)$ of the form $u_\nu$ can be ignored  while computing $\dbar u_\nu$, provided  
$\dbar u_\nu\in L^2_{0,{n_1}+1}(\Omega)$.
Since $z_n^\nu$ is holomorphic, we have on $F(H)$:
\[ \dbar u_\nu =\displaystyle{{\sqrt{\frac{\nu}{\gamma(\nu)}}} \cdot z_n^\nu \cdot\dbar\left(\chi\left({\displaystyle{\frac{\Norm{'z}}{\Abs{z'}^\alpha}}}\right)\right)
 \wedge\left( \bigwedge_{j=1}^{{n_1}}d\overline{z_j}\right) }.\]
  From now on, $C$ denotes a constant independent of
$\nu$, which may be different at different occurrences of the symbol.  Using \eqref{eq-nablachi}, we have,
 \begin{align}
\norm{\dbar u_\nu}_{L^2_{0,{n_1}+1}(\Omega)}&\leq
\displaystyle{\sqrt{\frac{\nu}{\gamma(\nu)}}}\cdot \norm{\abs{\nabla\left(\chi\left(\frac{\Norm{'z}}{\Abs{z'}^\alpha}\right)\right)}z_n^\nu}_{L^2(F(H))}\label{eq-dbarnu0}\\
&\leq C\displaystyle{\sqrt{\frac{\nu}{\gamma(\nu)}}} \cdot \norm{\frac{z_n^\nu}{\Abs{z'}^{\alpha}}}_{L^2(H)}\label{eq-dbarunu1}.
\end{align}

To compute integrals over the Hartogs Triangle, we will use the new coordinates $(v,w)$ induced by
the  homeomorphism $\Phi$
from $\BOne \times (\BTwo\setminus \{0\})\subset \cx^n$ to $H\subset \cx^{n}$ given by
\begin{equation}\label{eq-Phi} \Phi(v,w) = (\Abs{w}^\alpha v,w),\end{equation}
which is differentiable a.e., and which has Jacobian determinant $\Abs{w}^{2\alpha {n_1}}$ a.e. Recalling
that $dV$ denotes the Lebesgue measure on Euclidean space, we have
\begin{align}
\norm{\frac{z_n^\nu}{\Abs{z'}^{\alpha}}}_{L^2(H)}^2 &= \int_H \frac{\abs{z_n}^{2\nu}}{\Abs{z'}^{2\alpha}} dV(z)\nonumber\\
&=\int_{\BOne \times (\BTwo\setminus \{0\})}\frac{\abs{w_{n_2}}^{2\nu}}{\Abs{w}^{2\alpha}} \Abs{w}^{2\alpha {n_1}}
dV(v,w)\nonumber\\
&={\rm Vol}(\BOne)\int_{\BTwo} \abs{w_{n_2}}^{2\nu}\Abs{w}^{2\alpha ({n_1}-1)} dV(w)\nonumber\\
&={\rm Vol}(\BOne)\frac{\gamma(\nu)}{2(\nu+\alpha({n_1}-1)+{n_2})}, \label{eq-w2nu}
\end{align}
where we have used the lemma proved in Section~\ref{sec-prelim}.
Combining \eqref{eq-dbarunu1} and \eqref{eq-w2nu}  gives the estimate
\[\norm{\dbar u_\nu}_{L^2_{0,{n_1}+1}(\Omega)}\leq C \sqrt{\frac{\nu}{\nu+\alpha({n_1}-1)+{n_2}}},  \]
hence combined with the fact that $u_\nu\in L^2_{0,n_1}(\Omega)$ 
each $u_\nu$ is in the domain $\dom(\dbar)$ of the Hilbert space operator $\dbar$,
and there is a $C$ independent of $\nu$ such that
\begin{equation}\label{eq-dbarunu} \norm{\dbar u_\nu}_{L^2_{0,{n_1}+1}(\Omega)}\leq C. \end{equation}

Let $\vartheta$ denote the formal adjoint of $\dbar$ on $L^2_{0,{n_1}}(\Omega)$ with respect to the
metric $h$ defined above. Then on $F(H)$, in the coordinates $(z_1,\dots, z_n)$, the formal expression for
$\vartheta$ coincides with the usual one in Euclidean space. If
\begin{equation}\label{eq-U} U_\nu(z) =\displaystyle{\sqrt{\frac{\nu}{\gamma(\nu)}}} \cdot\chi\left({\displaystyle{\frac{\Norm{'z}}{\Abs{z'}^\alpha}}}\right)
z_n^\nu, \end{equation}
so that on $F(H)$ we have $u_\nu = U_\nu d\overline{z_1}\wedge\dots\wedge d\overline{z_{n_1}}$, we have
on $F(H)$
 \begin{align*} \vartheta u_\nu & = -\sum_{j=1}^{n_1} \frac{\partial U_\nu}{\partial z_j}
 d\overline{z_1}\wedge \dots \wedge\widehat{d\overline{z_j}}\wedge \dots\wedge d\overline{z_{{n_1}}}\\
 &=-\displaystyle{\sqrt{\frac{\nu}{\gamma(\nu)}}}
 \cdot z_n^\nu\cdot\sum_{j=1}^{n_1} \frac{\partial}{\partial z_j}\left(\chi\left({\displaystyle{\frac{\Norm{'z}}{\Abs{z'}^\alpha}}}\right)  \right)
 d\overline{z_1}\wedge \dots \wedge\widehat{d\overline{z_j}}\wedge \dots\wedge d\overline{z_{{n_1}}},\end{align*}
 where the hat on  $\widehat{d\overline{z_j}}$ denotes the omission of this factor from the wedge product. Therefore,
 using  \eqref{eq-nablachi} we compute
\begin{align*}
\norm{\vartheta u_\nu}_{L^2_{0,{n_1}-1}(\Omega)}&= \norm{\vartheta u_\nu}_{L^2_{0,{n_1}-1}(H)}\\
&\leq C\displaystyle{\sqrt{\frac{\nu}{\gamma(\nu)}}}\norm{\abs{\nabla\left(\chi\left(\frac{\Norm{'z}}{\Abs{z'}^\alpha}\right)\right)}z_n^\nu}_{L^2(F(H))},
\end{align*}
which is the same quantity as in \eqref{eq-dbarnu0}. Therefore, the same arguments as used before for $\dbar u_\nu$ show that
 $\vartheta u_\nu\in L^2_{0,{n_1}-1}(\Omega)$, and indeed is
uniformly bounded in $L^2_{0,{n_1}-1}(\Omega)$ independently of $\nu$.

Further, the form $u_\nu$ vanishes everywhere on the boundary $b\Omega$ except
in the patch $\bd\Omega\cap \overline{F(H)}$, which consists of the  disjoint union of the sets $F(S)$ and
$\{F(0)\}\cap b\Omega$, with $F$ as in 
\eqref{eq-f}. (Note that  $\{F(0)\}\cap b\Omega$ is empty if $F(0)\in\Omega$.)
Near $F(S)$ the boundary is represented in the local coordinates $('z,z')\in\cx^{{n_1}+{n_2}}$ as
$\{('z,z')\in\cx^n\colon \Abs{z'}=1\}$. From the formula \eqref{eq-unudef} we see that the complex-normal component of $u_\nu$
vanishes on $\bd\Omega\cap F(S)$. 
For $\epsilon>0$, let $\chi_\epsilon$ be a cutoff of the type used in the proof of Lemma~\ref{newlemma}:
$0\leq \chi_\epsilon\leq 1$,  $\chi_\epsilon\equiv 1$ near $F(0)$,  $\chi_\epsilon$ vanishes outside $B(F(0),\epsilon)$ and
 $\abs{\nabla\chi_\epsilon}= O(\epsilon^{-1})$. Note from the definition that $u_\nu$ has bounded coefficients.
 Writing $\psi_\epsilon=1-\chi_\epsilon$ we have
\[ \vartheta(\psi_\epsilon u_\nu)= U_\nu (z)\cdot\left(-\sum_{j=1}^{n_1} \frac{\partial \psi_\epsilon}{\partial z_j}
 d\overline{z_1}\wedge \dots \wedge\widehat{d\overline{z_j}}\wedge \dots\wedge d\overline{z_{{n_1}}}\right) +\psi_\epsilon \vartheta u_\nu,\]
where $U_\nu$ is as in \eqref{eq-U} on $F(H)$ and extended as zero elsewhere.
The second term approaches $\vartheta u_\nu$ in 
the $L^2$-topology as $\epsilon\to 0^+$ and the $L^2$-norm of the first term is bounded by 
\[ C\norm{U_\nu}_{L^\infty(\Omega)}\epsilon^{-1} \sqrt{{\rm Vol}(B(F(0),\epsilon))},\] which goes to 0 as
 $\epsilon\to 0^+$. Note that $\psi_\epsilon u_\nu$ is  in $\dom(\dbar^*)$, since it is a smooth $(0,n_1)$-form on $\overline{\Omega}$
 whose complex-normal
 component vanishes along $F(S)\subset\bd\Omega$, and $\psi_\epsilon u_\nu$ itself vanishes elsewhere on the boundary.  Further, 
 $\psi_\epsilon u_\nu\to u_\nu$  in the graph norm of $\vartheta$. It follows that  $u_\nu\in \dom(\dbar^*)$,
 and from the computation above,   $\norm{\dbar^*u_\nu}_{L^2_{0,{n_1}-1}(\Omega)}\leq C$, where $C$ does not depend on $\nu$.
Combining this with \eqref{eq-dbarunu}, it follows that each $u_\nu$ lies in the space $\mathfrak{H}$ of \eqref{eq-mathfrakh}, and we have
\[ \norm{u_\nu}_{\mathfrak{H}} \leq C.\]


We now compute $\norm{u_\nu}_{L^2_{0,{n_1}}(\Omega)}$.  We again use the change of coordinates
given by \eqref{eq-Phi}:
\begin{align*}
\norm{u_\nu}_{L^2_{0,{n_1}}(\Omega)}^2&=\norm{u_\nu}_{L^2_{0,{n_1}}(F(H))}^2\\
&= \displaystyle{\frac{\nu}{\gamma(\nu)}}\int_H \abs{ \chi\left(\frac{\Norm{'z}}{\Abs{z'}^\alpha}\right) z_n^\nu}^2dV(z)\\
&= \displaystyle{\frac{\nu}{\gamma(\nu)}} \int_{\BOne \times (\BTwo\setminus \{0\})} \chi^2(\Norm{v}) \abs{w_{n_2}}^{2\nu} \Abs{w}^{2\alpha {n_1}} dV(v,w)\\
&= \displaystyle{\frac{\nu}{\gamma(\nu)}} \int_{\BOne} \chi^2(\Norm{v}) dV(v) \times \int_{\BTwo} \abs{w_{n_2}}^{2\nu} \Abs{w}^{2\alpha {n_1}} dV(w)\\
&= C \displaystyle{\frac{\nu}{\gamma(\nu)}}\times \frac{\gamma(\nu)}{2(\nu+\alpha {n_1}+{n_2})}\\
&=C\displaystyle{\frac{\nu}{\nu+\alpha {n_1}+{n_2}}},
\end{align*}
where in the line before the last, we have made use of the Lemma~\ref{oldlemma}. Therefore
we see that  there is a constant $\lambda>0$ independent of $\nu$ such that
$\norm{u_\nu}_{L^2_{0,{n_1}}(\Omega)}\geq \lambda$.

Now let $\mu$ and $\nu$ be distinct positive integers. We have, using the same change of variables \eqref{eq-Phi}:
\begin{align*}
\left(u_\mu, u_\nu\right)_{L^2_{0,{n_1}}(\Omega)}&=\left(u_\mu, u_\nu\right)_{L^2_{0,{n_1}}(H)}\\
&=\displaystyle{\frac{\nu}{\gamma(\nu)}}\int_H
 \chi^2\left(\frac{\Norm{'z}}{\Abs{z'}^\alpha}\right)z_n^\mu\overline{z_n}^\nu dV(z)\\
&=\displaystyle{\frac{\nu}{\gamma(\nu)}} \int_{\BOne \times (\BTwo\setminus \{0\})} \chi^2\left(\Norm{v}\right) w_{n_2}^\mu \overline{w_{n_2}}^\nu \Abs{w}^{2\alpha {n_1}} dV(v,w)\\
&= \displaystyle{\frac{\nu}{\gamma(\nu)}} \int_{\BOne} \chi^2(\Norm{v}) dV(v) \times \int_{\BTwo} w_{n_2}^\mu \overline{w_{n_2}}^\nu \Abs{w}^{2\alpha {n_1}} dV(w).
\end{align*}
Denote the  integral over $\BTwo$ in this product by $\mathsf{K}$. We claim that $\mathsf{K}=0$.
Assuming this for a moment it follows that  $\left(u_\mu, u_\nu\right)_{L^2_{0,{n_1}}(\Omega)}=0$,
and hence $\norm{u_\mu-u_\nu}_{L^2_{0,{n_1}}(\Omega)}\geq \sqrt{2}\lambda$ for each pair of distinct indices $\mu$ and $\nu$, and
hence $\{u_\nu\}$ has no convergent subsequence in $L^2_{0,{n_1}}(\Omega)$. The proof of the theorem is complete,
provided we can show that $\mathsf{K}=0$.

Let $\theta$ be a real number which is not a rational multiple of $\pi$,
and in the integral representing $\mathsf{K}$, we make a change of variables $w=e^{i\theta} t$, where $t\in\cx^{n_2}$.
Then, since  this transformation maps $\BTwo$ to itself and its Jacobian  is identically 1, we have
\begin{align*}
\mathsf{K} &= \int_{\BTwo} w_{n_2}^\mu \overline{w_{n_2}}^\nu \Abs{w}^{2\alpha {n_1}} dV(w)\\
&= \int_{\BTwo} e^{i(\mu-\nu)\theta}t_{n_2}^\mu \overline{t_{n_2}}^\nu \Abs{t}^{2\alpha {n_1}} dV(t)\\
&= e^{i(\mu-\nu)\theta} \mathsf{K},
\end{align*}
so that $\mathsf{K}=0$. This completes the proof.
\section{Concluding remarks}

The noncompactness of the $\dbar$-Neumann operator implies that there is a  point  in the spectrum of
the Complex Laplacian $\Box_q$ acting on $L^2_{0,q}(\Omega)$ which is not an eigenvalue of finite multiplicity. It is of
interest to determine the nature of this essential spectrum. In the case of the polydisc  or product
domains, the spectrum can be determined explicitly (see \cite{fu,chak}), and while the spectrum
of a polydisc  consists of eigenvalues only, there are infinitely
many eigenvalues in the essential spectrum, each of infinite multiplicity (this is true, in particular, for the 
smallest nonzero eigenvalue.) It would be of interest to know
whether the same is true for the domain $H$, for example in the classical situation $n_1=n_2=\alpha=1$.



\begin{thebibliography}{12}
\bibitem{celik}\c{C}elik, Mehmet and Straube, Emil J;
Observations regarding compactness in the $\dbar$-Neumann problem.
{\em Complex Var. Elliptic Equ.} {\bf 54} (2009), no. 3-4,  173--186.
\bibitem{cs} Chen, So-Chin and Shaw, Mei-Chi;
{\em Partial differential equations in several complex variables.}
AMS/IP Studies in Advanced Mathematics,{\bf 19.}
International Press, Boston, MA, 2001.
\bibitem{chak} Chakrabarti, Debraj;
Spectrum of the complex Laplacian on product domains.
{\em Proc. Amer. Math. Soc.} {\bf 138} (2010), no. 9, 3187--3202.
\bibitem{fk} Folland, G. B.and Kohn, J. J.;
{\em The Neumann problem for the Cauchy-Riemann complex.}
Annals of Mathematics Studies, No.{\bf 75.} Princeton University Press, 1972.
\bibitem{straubefu} Fu, Siqi and Straube, Emil J.;
Compactness in the $\dbar$--Neumann problem. {Complex analysis and geometry (Columbus, OH, 1999)}, 141--160,
{\em Ohio State Univ. Math. Res. Inst. Publ.}, {\bf 9}, de Gruyter, Berlin, 2001.
\bibitem{fu} Fu, Siqi;
{Spectrum of the $\dbar$--Neumann Laplacian on polydiscs.}
{\em Proc. Amer. Math. Soc.} {\bf  135} (2007), no. 3, 725--730
\bibitem{hp} Harvey, Reese and Polking, John;
Removable singularities of solutions of linear partial differential equations.
{\em Acta Math.} {\bf 125} (1970) 39--56.
\bibitem{khanh} Khanh, Tran Vu; {\em A general method of weights in the $\overline{\partial}$-Neumann problem.}
Ph. D. Thesis submitted to the  University of 
Padova, Italy, in December 2009. Available online at {\texttt{ arxiv.org}}, Article-Id: 1001.5093.
\bibitem{krantz} Krantz, Steven G.;
Compactness of the $\overline{\partial}$--Neumann operator. {\em Proc. Amer. Math. Soc.} {\bf 103} (1988), no. 4, 1136--1138.
\bibitem{ligo} Ligocka, Ewa; The regularity of the weighted Bergman projections. {\em Seminar of Deformations},
{\em Lecture notes in Mathematics }{\bf 1165}, 197--203.

\bibitem{straube} Straube, Emil J.;
{\em Lectures on the $L^2$-Sobolev theory of the $\dbar$--Neumann problem.}
ESI Lectures in Mathematics and Physics. European Mathematical Society, Z\"{u}rich, 2010.

\end{thebibliography}
\end{document}